\newtheorem{proposition}{Proposition}[section]
\newtheorem{theorem}[proposition]{Theorem}
\newtheorem*{theorem*}{Theorem}
\newtheorem*{thm_3.3}{Theorem 3.3}
\newtheorem*{thm_3.1}{Theorem 3.1}
\newtheorem{corollary}[proposition]{Corollary}
\theoremstyle{definition}
\newtheorem{remark}[proposition]{Remark}
\begin{document}
\title[Inequality on $t_\nu(K)$ defined by Livingston and Naik and its applications]{Inequality on $t_\nu(K)$ defined by Livingston and Naik and its applications}

\author{JungHwan Park}
\address{Department of Mathematics, Rice University MS-136\\
6100 Main St. P.O. Box 1892\\
Houston, TX 77251-1892}
\email{jp35@rice.edu}

\date{\today}

\subjclass[2000]{57M25}

\begin{abstract}
Let $D_+(K,t)$ denote the positive $t$-twisted double of $K$. For a fixed integer-valued additive concordance invariant $\nu$ that bounds the smooth four genus of a knot and determines the smooth four genus of positive torus knots, Livingston and Naik defined $t_\nu(K)$ to be the greatest integer $t$ such that $\nu(D_+(K,t)) = 1$. Let $K_1$ and $K_2$ be any knots then we prove the following inequality : $t_\nu(K_1) + t_\nu(K_2) \leq t_\nu(K_1 \# K_2) \leq min(t_\nu(K_1) - t_\nu(-K_2), t_\nu(K_2) - t_\nu(-K_1)).$ As an application we show that $t_\tau(K) \neq t_s(K)$ for infinitely many knots and that their difference can be arbitrarily large, where $t_\tau(K)$ (respectively $t_s(K)$) is $t_\nu(K)$ when $\nu$ is Ozv\'{a}th-Szab\'{o} invariant $\tau$ (respectively when $\nu$ is normalized Rasmussen $s$ invariant).
\end{abstract}

\maketitle

\section{Introduction}\label{Introduction}
Let $\nu$ be any integer-valued concordance invariant with the following properties: 
\begin{enumerate}
\item additive under connected sum,
\item $|\nu(K)| \leq g_4(K)$,
\item $\nu(T_{p,q}) = (p-1)(q-1)/2$ for $p,q>0$.
\end{enumerate}
Notice that the Ozv\'{a}th-Szab\'{o} invariant $\tau$ satisfies the above properties \cite{OzSz03}, as does the Rasmussen $s$ invariant when suitably normalized (i.e. when $\nu = -s/2$) \cite{Ra10}. Let $D_\pm(K,t)$ denote the positive or negative $t$-twisted double of $K$. Then for a fixed concordance invariant $\nu$, Livingston and Naik \cite{LN06} show that $\nu(D_+(K,t))$ is always $1$ or $0$ (see Theorem $2.1$) and define $t_\nu(K)$ to be the greatest integer $t$ such that $\nu(D_+(K,t)) = 1$. Specializing to $\tau$ and $s$, we have the following two concordance invariants $t_\tau(K)$ (respectively $t_s(K)$) which is the greatest integer $t$ where $\tau(D_+(K,t)) = 1$ (respectively $-s(D_+(K,t))/2 = 1$). Hedden and Ording \cite{HO08} show that there exist $K$ for which $t_\tau(K) \neq t_s(K)$, in particular they show that $t_\tau(T_{2,2n+1}) = 2n-1$ whereas $t_s(T_{2,3}) \geq 2$, $t_s(T_{2,5}) \geq 5$, and $t_s(T_{2,7}) \geq 8$ (In fact, it is easy to verity that $t_s(T_{2,3}) = 2$ and $t_s(T_{2,5}) = 5$ using Bar-Natan's program \cite{Bar}). This was the first example known of a knot $K$ for which $\tau(K) \neq -s(K)/2$. (Note that it is proven that $\tau \neq -s/2$ even for topologically slice knots by Livingston, \cite{Li08}.) Further they make a remark that it would be reasonable to guess that $t_s(T_{2,2n+1}) = 3n-1$, which would imply that $t_\tau(K) \neq t_s(K)$ for infinitely many different knots. Also, Hedden \cite{He07} showed that $t_\tau(K)$ does not give more information than $\tau(K)$:

\begin{theorem}\cite[Theorem $1.5$]{He07} $t_\tau(K) = 2\tau(K) -1$.
\end{theorem}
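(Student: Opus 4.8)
The plan is to use Theorem 2.1 to reduce the statement to the location of a single threshold, and then to compute $\tau(D_+(K,t))$ directly from knot Floer homology. By Theorem 2.1, $\tau(D_+(K,t))$ is always $0$ or $1$. Moreover $D_+(K,t)$ bounds an evident genus-one Seifert surface, and changing one of the two clasp crossings of $D_+(K,t)$ produces the unknot; since this is a single crossing change, the crossing-change inequality for $\tau$ recovers $\tau(D_+(K,t)) \in \{0,1\}$ and shows that $\tau(D_+(K,t))$ is monotone non-increasing in $t$. Hence there is a unique jump from $1$ to $0$, and it suffices to prove that $\tau(D_+(K,t)) = 1$ for $t \le 2\tau(K)-1$ while $\tau(D_+(K,t)) = 0$ for $t \ge 2\tau(K)$, i.e. that the jump occurs between $t = 2\tau(K)-1$ and $t = 2\tau(K)$.

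To locate this jump I would compute the filtered chain homotopy type underlying $\tau(D_+(K,t))$ using the skein exact sequence at one of the clasp crossings. The oriented resolution there deletes the clasp and leaves two parallel, $t$-twisted copies of $K$, that is, the two-component $(2,2t)$-cable of $K$, while the crossing change yields the unknot $U$. This produces a filtered exact triangle relating $\widehat{HFK}(D_+(K,t))$, $\widehat{HFK}(U)$, and the knot Floer homology of the cable $K_{2,2t}$, with the Alexander filtration tracked through the maps.

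The heart of the argument is then a computation of the filtered knot Floer homology of $K_{2,2t}$ in terms of the full filtered invariant $\widehat{HFK}(K)$, for which I would use the relationship between the cable and sufficiently positive surgeries on $K$ (the cabling/large-surgery formula). The goal is to isolate, as a function of $t$, the top filtration level carrying the distinguished generator that detects $\tau$, together with its Maslov grading; this top level is governed precisely by $\tau(K)$. Transporting this generator through the exact triangle then shows that the filtration level of the $\tau$-detecting class in $\widehat{HFK}(D_+(K,t))$ changes exactly as $t$ crosses $2\tau(K)$, which is the desired threshold.

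I expect the main obstacle to be this cable computation together with the grading bookkeeping: one must run the cabling/surgery formula in the \emph{filtered} setting rather than merely computing the ranks of $\widehat{HFK}$, keep careful track of the Maslov and Alexander gradings through the skein triangle, and verify that the single relevant generator survives on the correct side of the threshold in each of the two regimes $t < 2\tau(K)$ and $t \ge 2\tau(K)$. Once the threshold is established at $2\tau(K)$, the definition of $t_\tau(K)$ as the greatest $t$ with $\tau(D_+(K,t)) = 1$ immediately gives $t_\tau(K) = 2\tau(K) - 1$.
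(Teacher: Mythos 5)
The paper does not prove this statement at all: it is quoted from Hedden \cite[Theorem 1.5]{He07}, so there is no in-paper argument to compare your proposal against. Measured against Hedden's published proof, your outline does follow the same route: a skein exact triangle at a clasp crossing relating $D_+(K,t)$, the unknot, and the $(2,2t)$-cable link, followed by a computation of the cable's filtered knot Floer homology via large surgeries on $K$. The preliminary reductions you carry out (that $\tau(D_+(K,t)) \in \{0,1\}$ and is non-increasing in $t$, so that a single threshold exists) are correct, but they are exactly the content of Theorem 2.1, which the paper already quotes from Livingston--Naik; they contribute nothing toward identifying the threshold.

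The genuine gap is that the entire mathematical content of the theorem --- that the threshold sits at $t = 2\tau(K)$ --- is asserted rather than derived. You write that the top filtration level of the $\tau$-detecting generator of the cable ``is governed precisely by $\tau(K)$'' and that the jump occurs ``exactly as $t$ crosses $2\tau(K)$,'' but you give no argument for the factor of $2$, nor for why the answer is $2\tau(K)$ rather than, say, a quantity controlled by $TB(K)$ (which is all that Theorem 2.1 can see). In Hedden's proof this is where all the work lies: one computes $\widehat{HFK}_*(D_+(K,t),1)$ explicitly in terms of the homologies of the Alexander subcomplexes of $\widehat{CF}(S^3)$ coming from $K$, and the dichotomy $t < 2\tau(K)$ versus $t \geq 2\tau(K)$ emerges because those homologies change character precisely at Alexander grading $\tau(K)$, with the factor of $2$ coming from the two parallel strands of the cable; one must also track the Maslov gradings through the triangle to see which summand carries the class detecting $\tau$. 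As written, your proposal establishes only that some threshold exists --- which is already Theorem 2.1 --- and not where it is, so it does not yet constitute a proof of $t_\tau(K) = 2\tau(K)-1$.
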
 

However, $t_s(K)$ is not well understood. In this paper we show the following inequality :

\begin{theorem} Let $K_1$ and $K_2$ be any knots and $\nu$ be any integer-valued concordance invariant with properties $(1)$, $(2)$, and $(3)$ as above then the following inequality hold:
$$t_\nu(K_1) + t_\nu(K_2) \leq t_\nu(K_1 \# K_2) \leq min(t_\nu(K_1) - t_\nu(-K_2), t_\nu(K_2) - t_\nu(-K_1)).$$
\end{theorem}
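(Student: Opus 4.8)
The plan is to reduce the entire statement to a single geometric input — a genus-one cobordism between the connected sum of doubles and the double of the connected sum — together with the elementary formal properties of $\nu$ and $t_\nu$. Write $a_i = t_\nu(K_i)$, so that by definition $\nu(D_+(K_i, a_i)) = 1$. I would first record the two facts about $\nu$ that I will use repeatedly: by properties $(1)$ and $(2)$, $\nu(-K) = -\nu(K)$ (since $K \# -K$ is slice), and for any two knots $J, J'$ cobounding a genus-$g$ surface in $S^3 \times I$, the knot $J \# -J'$ bounds a surface of genus $g$ in $B^4$, so $|\nu(J) - \nu(J')| = |\nu(J \# -J')| \le g_4(J \# -J') \le g$. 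I would also note that $t_\nu$ is itself a concordance invariant: if $K$ and $K'$ are concordant then $D_+(K,t)$ and $D_+(K',t)$ are concordant for every $t$ (they are satellites with the same winding-number-zero pattern and concordant companions), hence $\nu(D_+(K,t)) = \nu(D_+(K',t))$ for all $t$, and therefore $t_\nu(K) = t_\nu(K')$.

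The geometric heart of the argument is the following claim, which I would establish by an explicit band/crossing-change picture: $D_+(K_1, t_1) \# D_+(K_2, t_2)$ and $D_+(K_1 \# K_2, t_1 + t_2)$ cobound a surface of genus one. Each doubled knot bounds its standard once-punctured torus, built from a band following the companion together with a clasping band; in the connected sum one has two companion-following bands (following $K_1$ and $K_2$, contributing $t_1$ and $t_2$ twists) and two clasps, whereas $D_+(K_1 \# K_2, t_1 + t_2)$ has a single band following $K_1 \# K_2$ carrying $t_1 + t_2$ twists and a single clasp. Merging the two companion-following bands and eliminating the redundant clasp is accomplished by a single crossing change, which I will realize by a genus-one cobordism; the twist counts match precisely because the self-framings add under connected sum. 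This is the step I expect to be the main obstacle, since it requires a careful normal-form description of the doubling surfaces and honest bookkeeping of the framings and clasps.

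Granting the claim, the lower bound is immediate. Taking $t_i = a_i$ and combining the cobordism bound with additivity gives
\[
\nu(D_+(K_1 \# K_2, a_1 + a_2)) \ge \nu(D_+(K_1,a_1)) + \nu(D_+(K_2,a_2)) - 1 = 1 + 1 - 1 = 1.
\]
By Theorem $2.1$, $\nu$ of a double takes values in $\{0,1\}$, so this forces $\nu(D_+(K_1 \# K_2, a_1+a_2)) = 1$; hence $a_1 + a_2$ is a value of $t$ realizing $\nu = 1$, and therefore $t_\nu(K_1 \# K_2) \ge a_1 + a_2 = t_\nu(K_1) + t_\nu(K_2)$.

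For the upper bound I would bootstrap from the lower bound. Applying it to the pair $K_1 \# K_2$ and $-K_2$ yields
\[
t_\nu(K_1 \# K_2) + t_\nu(-K_2) \le t_\nu\big((K_1 \# K_2) \# (-K_2)\big).
\]
Since $(K_1 \# K_2)\# (-K_2) = K_1 \# (K_2 \# -K_2)$ is concordant to $K_1$, the concordance invariance of $t_\nu$ gives $t_\nu\big((K_1 \# K_2)\# (-K_2)\big) = t_\nu(K_1)$, whence $t_\nu(K_1 \# K_2) \le t_\nu(K_1) - t_\nu(-K_2)$. The symmetric argument with the roles of $K_1$ and $K_2$ interchanged gives $t_\nu(K_1 \# K_2) \le t_\nu(K_2) - t_\nu(-K_1)$, and taking the minimum completes the proof.
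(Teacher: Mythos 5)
Your argument is correct, and while your lower bound coincides with the paper's (the same genus-one cobordism from $D_+(K_1,t_1)\#D_+(K_2,t_2)$ to $D_+(K_1\#K_2,t_1+t_2)$ -- the paper realizes it by two explicit band moves in its Figure 1, which is exactly the normal-form bookkeeping you flag as the main obstacle -- combined with additivity of $\nu$ and the $\{0,1\}$-valuedness of $\nu$ on positive doubles from Theorem 2.1), your upper bound takes a genuinely different route. The paper proves the upper bound by a second geometric argument: it changes the sign of the clasp to get a genus-one cobordism from $D_+(K_1,t_1)\#D_-(K_2,t_2)$ to $D_+(K_1\#K_2,t_1+t_2)$, sets $t_1=t_\nu(K_1)+1$ and $t_2=-t_\nu(-K_2)$ so that the connected sum has $\nu=-1$, and invokes the $D_-$ companion statement to Theorem 2.1 to force $\nu(D_+(K_1\#K_2,t_1+t_2))=0$. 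You instead bootstrap the upper bound formally from the lower bound applied to the pair $(K_1\#K_2,\,-K_2)$, together with the observation that $t_\nu$ is a concordance invariant because doubles of concordant knots (satellites with the same winding-number-zero pattern and framing) are concordant. Both are valid; your route needs only the positive-clasp cobordism and so avoids the $D_-$ discussion entirely, at the cost of importing the standard satellite-concordance fact, which is an input the paper never needs to make explicit. Your version also makes transparent \emph{why} the upper bound has the shape $t_\nu(K_1)-t_\nu(-K_2)$: it is literally superadditivity applied to $K_1\#K_2$ and $-K_2$.
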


We have the following as the immediate corollary:
\begin{corollary} For any positive integer $n$, there exists a knot $K_n$ such that 
$$|t_\tau(K_n)-t_s(K_n)| > n.$$
\end{corollary}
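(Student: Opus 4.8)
The plan is to exhibit an explicit family of knots built from the $(2,5)$-torus knot, for which $t_\tau$ can be computed exactly while $t_s$ can be bounded below using the main inequality. The key numerical inputs are the Hedden--Ording computation $t_s(T_{2,5}) = 5$ recorded in the introduction, together with the value $t_\tau(T_{2,5}) = 3$, which follows from the formula $t_\tau(K) = 2\tau(K) - 1$ and property $(3)$: since $\tau(T_{2,5}) = (2-1)(5-1)/2 = 2$ we get $t_\tau(T_{2,5}) = 2\tau(T_{2,5}) - 1 = 3$. The point is that for this single knot $t_s$ already exceeds $t_\tau$ by one, and I will amplify this gap by taking connected sums.

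Concretely, I would set $K_n := \#^n T_{2,5}$, the connected sum of $n$ copies of $T_{2,5}$. For the $\tau$ side I use the formula $t_\tau(K) = 2\tau(K) - 1$ together with the additivity of $\tau$ (property $(1)$): since $\tau(K_n) = n\,\tau(T_{2,5}) = 2n$, I obtain the exact value
$$t_\tau(K_n) = 2\tau(K_n) - 1 = 4n - 1.$$

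For the $s$ side I would apply the lower bound from the main Theorem with $\nu = -s/2$. Iterating $t_s(K_1) + t_s(K_2) \leq t_s(K_1 \# K_2)$ by induction on the number of summands yields
$$t_s(K_n) \geq n\, t_s(T_{2,5}) = 5n.$$
Combining the two estimates gives $t_s(K_n) - t_\tau(K_n) \geq 5n - (4n - 1) = n + 1 > n$, which is exactly the desired inequality $|t_\tau(K_n) - t_s(K_n)| > n$.

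I do not expect a serious obstacle here, as the statement follows almost formally once the main Theorem is available. The two points requiring care are, first, the verification that the lower bound in the main Theorem genuinely iterates over an arbitrary number of connected summands, which is a routine induction; and second, the choice of seed knot. Note that $T_{2,3}$ would \emph{not} suffice, since $t_s(T_{2,3}) = 2 = 2\tau(T_{2,3})$, so the per-summand gap would be zero and only a bounded difference would result. It is precisely the choice of $T_{2,5}$, where $t_s$ strictly exceeds $2\tau$, that makes the difference grow linearly in the number of summands.
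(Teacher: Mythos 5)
Your proposal is correct and follows exactly the paper's own argument: take $K_n = \#^n T_{2,5}$, get $t_\tau(K_n) = 4n-1$ from Hedden's formula $t_\tau(K) = 2\tau(K)-1$, and get $t_s(K_n) \geq 5n$ by iterating the lower bound of Theorem 1.2 with the Hedden--Ording input $t_s(T_{2,5}) \geq 5$. Your write-up is in fact slightly more careful than the paper's (which compresses the computation of $t_\tau(K_n)$ into a single line with a small typo), but the route is identical.
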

\begin{proof} Let $K_n$ be $n$ connected sum of $T_{2,5}$. Then by Theorem $1.2$ and the fact that $t_\tau(K_n) = n\cdot\tau(T_{2,5}) -1 = 4n-1$ and $t_s(T_{2,5}) \geq 5$ by Theorem $1.1$ and \cite{HO08}, the result follows.
\end{proof}

We end this section with the following remark:

\begin{remark} If we assume that $t_s(K)$ is a polynomial of $-s(K)/2$ with integer coefficients, it is easy to verity that $t_s(K) = 3\cdot (-s(K)/2) - 1$ using Theorem $1.2$. Then in the light of \cite{HO08} it would be reasonable to guess that $t_s(K) = 3\cdot (-s(K)/2) - 1$.
\end{remark}

\subsection{Acknowledgements} The author would like to thank his advisor Shelly Harvey, and also David Krcatovich for their helpful discussions.
\section{Proof of the Theorem $1.2$}\label{Proof of the Theorem $1.2$}
We will denote by $TB(K)$ the maximum value of the Thurston-Bennequin number, taken over all possible Legendrian representatives of $K$. Recall the following theorem from \cite{LN06}: 
\begin{theorem}\cite[Theorem $2$]{LN06} For each knot $K$ there is an integer $t_\nu(K)$ such that:
$$\nu(D_+(K,t))=\left\{
\begin{array}{c l}      
    0 & \text{for } t > t_\nu(K)\\
    1 & \text{for } t \leq t_\nu(K),
\end{array}\right.$$ where $t_\nu(K)$ satisfies $TB(K) \leq t_\nu(K) < -TB(-K)$.
\end{theorem}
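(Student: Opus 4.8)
The plan is to pin down $\nu(D_+(K,t))$ from five inputs: an upper bound from the genus-one Seifert surface, a lower bound from the clasp, monotonicity in $t$, and two sharp estimates at the ends coming from contact geometry. First I would note that $D_+(K,t)$ bounds the evident genus-one Seifert surface (two parallel bands following $K$, closed off by the clasp), so $g_4(D_+(K,t)) \le 1$ and property~(2) gives $\nu(D_+(K,t)) \in \{-1,0,1\}$.

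The second ingredient is the directional crossing-change inequality for $\nu$: if $K_+$ is obtained from $K_-$ by changing one negative crossing to a positive one, then $\nu(K_-) \le \nu(K_+) \le \nu(K_-)+1$. (The upper Lipschitz bound is formal from (1)--(2), since a crossing change is realized by a genus-one cobordism; the monotone direction is the substantive part and is known for $\tau$ and for $-s/2$.) I would use this twice. Changing one of the two positive clasp crossings and cancelling by a Reidemeister~II move unknots the double, so $D_+(K,t)$ is obtained from the unknot $U$ by a negative-to-positive change; hence $\nu(D_+(K,t)) \ge \nu(U) = 0$, and with the genus bound $\nu(D_+(K,t)) \in \{0,1\}$. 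Next, in the twisting region a single crossing change converts $D_+(K,t)$ into $D_+(K,t-1)$ with the latter on the positive side, so $\nu(D_+(K,t-1)) \ge \nu(D_+(K,t))$; thus $t \mapsto \nu(D_+(K,t))$ is monotone non-increasing.

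It remains to locate the jump and match it against the Thurston--Bennequin bounds. For the lower bound I would take a Legendrian representative $L$ of $K$ realizing $tb(L) = TB(K)$ and observe that the positive-clasped double taken along the contact framing, namely $D_+(K, TB(K))$, bounds a quasipositive genus-one surface (Rudolph's Bennequin surface of $L$ doubled with a positive clasp). Since $\nu$ is sharp on quasipositive knots, $\nu(D_+(K, TB(K))) = g_4 = 1$, and monotonicity upgrades this to $\nu(D_+(K,t)) = 1$ for all $t \le TB(K)$, i.e.\ $t_\nu(K) \ge TB(K)$. For the upper bound I would apply the slice--Bennequin inequality for $\nu$, namely $TB(J) \le 2\nu(J) - 1$, to the mirror $\overline{D_+(K,t)} = D_-(-K,-t)$. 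For $t \ge -TB(-K)$ the twisting $-t \le TB(-K)$ is realizable by a Legendrian companion, and the negative clasp supplies a Legendrian representative of the double with $tb = -1$; hence $TB(\overline{D_+(K,t)}) \ge -1$, which forces $\nu(D_+(K,t)) = -\nu(\overline{D_+(K,t)}) \le 0$. Combined with $\nu \ge 0$ this gives $\nu(D_+(K,t)) = 0$ for $t \ge -TB(-K)$, so $t_\nu(K) < -TB(-K)$. Together with monotonicity and the values $\{0,1\}$, this produces the single threshold $t_\nu(K)$ with the asserted dichotomy and bounds. (Granting the Main Theorem, the upper bound also drops out instantly: $t_\nu(K) + t_\nu(-K) \le t_\nu(K \# -K) = t_\nu(U) = -1$ because $K\#-K$ is slice, and $t_\nu(-K) \ge TB(-K)$ then yields $t_\nu(K) \le -1 - TB(-K)$.)

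The main obstacle is precisely the contact-geometric positivity of $\nu$ that powers the two end estimates: that $\nu$ is \emph{sharp on quasipositive knots} (for the lower bound) and satisfies the slice--Bennequin inequality $TB \le 2\nu - 1$ together with the matching monotone crossing-change behaviour (for the upper bound and for monotonicity). These do not follow formally from axioms (1)--(3) alone; they are genuine theorems for $\tau$ (via the contact invariant and Hedden's computations) and for $-s/2$ (Plamenevskaya, Shumakovitch, Kawamura--Lobb). Everything else---the genus bound, the clasp unknotting, and the bookkeeping with $\nu(-J) = -\nu(J)$---is routine once these inputs are in hand.
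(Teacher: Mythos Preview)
This statement is not proved in the paper; it is Theorem~2 of \cite{LN06}, quoted here and used as a black box in the proof of Theorem~1.2. There is therefore no proof in the present paper against which to compare your proposal.

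Your outline is essentially Livingston--Naik's own argument, but your closing assessment undersells its scope. You say the slice--Bennequin inequality $tb + |r| \le 2\nu - 1$, sharpness on quasipositive knots, and the monotone crossing-change inequality ``do not follow formally from axioms (1)--(3) alone.'' The point of \cite{LN06} (and of Livingston's earlier paper on which it builds) is precisely that the slice--Bennequin inequality \emph{does} follow from (1)--(3): a Legendrian front for $K$ sits, via Rudolph's construction, on the Bennequin surface of a positive torus knot $T_{p,q}$, and the cobordism from $K$ to $T_{p,q}$ together with property~(3) yields the inequality; sharpness on quasipositive knots is then a formal consequence. Monotonicity of $t\mapsto\nu(D_+(K,t))$ can likewise be obtained from (1)--(3) alone, for instance via the genus-one cobordism from $D_+(K,t)\# T_{2,3}$ to $D_+(K,t-1)$ (the case $K_2=U$, $t_2=-1$ of Figure~1), which gives $\nu(D_+(K,t-1))\ge\nu(D_+(K,t))$. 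Thus the theorem genuinely holds for \emph{every} $\nu$ satisfying (1)--(3), whereas your argument as written would only establish it for those $\nu$ (such as $\tau$ and $-s/2$) for which the contact-geometric inputs have been verified independently.
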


A similar result holds for $D_-(K,t)$ using $t_\nu(-K)$:
$$\nu(D_-(K,t))=\left\{
\begin{array}{c l}      
    -1 & \text{for } t \geq -t_\nu(-K)\\
    0 & \text{for } t < -t_\nu(-K),
\end{array}\right.$$ where $t_\nu(-K)$ satisfies $TB(-K) \leq t_\nu(-K) < -TB(K)$.

Now, we are ready to prove the Theorem $1.2$. The proof completely relies on Theorem $2.1$.
\begin{proof}[Proof of Theorem $1.2$]
Let $t_1$ and $t_2$ be integers and consider $D_+(K_1,t_1) \# D_+(K_2,t_2)$ and $D_+(K_1 \# K_2, t_1+t_2)$. Then there is a genus one cobordism from $D_+(K_1,t_1) \# D_+(K_2,t_2)$ to $D_+(K_1 \# K_2, t_1+t_2)$ (see Figure $1$). Hence if $\nu(D_+(K_1,t_1)) = \nu(D_+(K_2,t_2)) = 1$, then $\nu(D_+(K_1 \# K_2, t_1+t_2)) = 1$. Letting $t_1 = t_\nu(K_1)$ and $t_2 = t_\nu(K_2)$, we have $\nu(D_+(K_1,t_1)) = \nu(D_+(K_2,t_2)) = 1$ by Theorem $2.1$. Using Theorem $2.1$ again we have $t_\nu(K_1) + t_\nu(K_2) \leq t_\nu(K_1 \# K_2)$. 

\begin{figure}[h]
\centering
\includegraphics[width=5.5in]{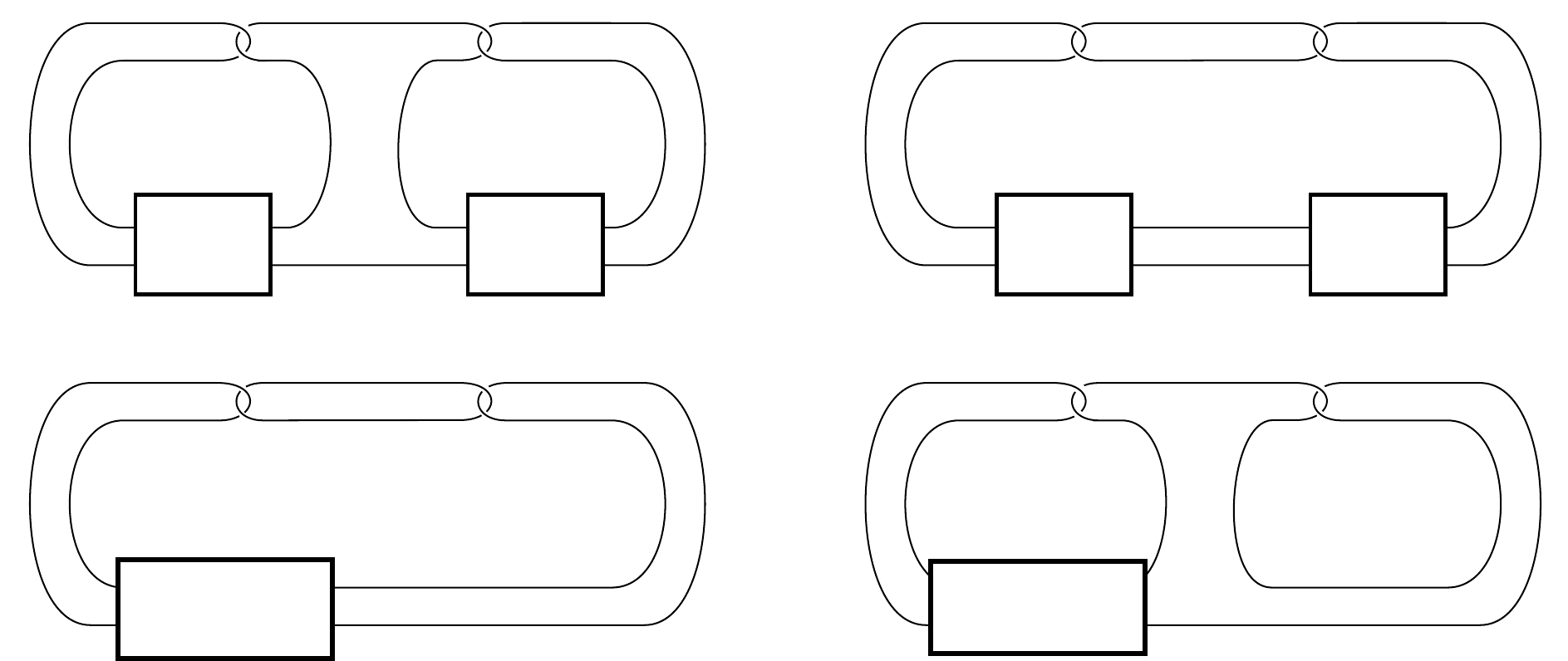}
\put(-2.85,1.7){ $\rightarrow$}
\put(-2.85,0.45){ $\rightarrow$}
\put(-5.8,0.45){ $\cong$}
\put(-5.02,1.4){ $K_1,t_1$}
\put(-3.85,1.4){ $K_2,t_2$}
\put(-2.02,1.4){ $K_1,t_1$}
\put(-0.9,1.4){ $K_2,t_2$}
\put(-5.05,0.2){ \footnotesize$K_1 \# K_2,$}
\put(-4.85,0.08){ \footnotesize$t_1+t_2$}
\put(-2.2,0.2){ \footnotesize$K_1 \# K_2,$}
\put(-2,0.08){ \footnotesize$t_1+t_2$}
\caption{A genus one cobordism from $D_+(K_1,t_1) \# D_+(K_2,t_2)$ to $D_+(K_1 \# K_2, t_1+t_2)$. The top left figure is $D_+(K_1,t_1) \# D_+(K_2,t_2)$, the top right figure is obtained from the top left figure after one band sum, the bottom left figure is obtained from the top right figure after an isotopy, and the bottom right figure is obtained from the bottom left figure after one band sum and it is isotopic to $D_+(K_1 \# K_2, t_1+t_2)$. }
\end{figure}

Using a similar argument, notice that there is a genus one cobordism from $D_+(K_1,t_1) \#$ $D_-(K_2,t_2)$ to $D_+(K_1 \# K_2, t_1+t_2)$ by simply changing the sign of the clasp in Figure $1$. Therefore if $\nu(D_+(K_1,t_1)) = 0$ and $\nu(D_-(K_2,t_2)) = -1$, then $\nu(D_+(K_1 \# K_2, t_1+t_2)) = 0$ by Theorem $2.1$. Letting $t_1 = t_\nu(K_1)+1$ and $t_2 = -t_\nu(-K_2)$, we have $\nu(D_+(K_1,t_1)) =0$ and $\nu(D_-(K_2,t_2)) = -1$ by Theorem $2.1$. Using Theorem $2.1$ again we have $ t_\nu(K_1)+1-t_\nu(-K_2) \geq t_\nu(K_1 \# K_2) +1$, hence $t_\nu(K_1 \# K_2) \leq t_\nu(K_1)-t_\nu(-K_2)$. Finally, by switching roles of $K_1$ and $K_2$ we also get $t_\nu(K_1 \# K_2) \leq t_\nu(K_2)-t_\nu(-K_1)$ which completes the proof.

\end{proof}

\bibliographystyle{alpha}
\bibliography{knotbib}

\end{document}